\newtheorem{theorem}{Theorem}[section]
\newtheorem{lemma}[theorem]{Lemma}
\newtheorem{proposition}[theorem]{Proposition}
\newtheorem{Corolario}[theorem]{Corollary}
\newtheorem{teo}{Theorem}
\newtheorem{Theorem LO}{Theorem (Lichnerowicz-Obata)}
\newtheorem{Theorem A}{Theorem (Ma-Du)}
\newtheorem{definition}[theorem]{Definition}
\numberwithin{equation}{section}
\newcommand{\be}{\begin{eqnarray}}
\newcommand{\en}{\end{eqnarray}}
\newcommand{\no}{\nonumber}
\newcommand{\Addresses}{{
\bigskip
\footnotesize
\noindent A. C. Bezerra -- Goiano Federal Institute, Brazil \\
\textit{E-mail address:} \texttt{adriano.bezerra@ifgoiano.edu.br}
\medskip\\
T. Castro Silva -- University of Brasilia, Brazil \\
\textit{E-mail address:} \texttt{tarcisio@mat.unb.br}
\medskip\\
F. Manfio -- University of Sao Paulo, Brazil \\
\textit{E-mail address:} \texttt{manfio@icmc.usp.br}
}}
\title{Eigenvalue estimates and applications \\on weighted manifolds}
\date{}
\author{A. C. Bezerra\footnote{Corresponding author.},
T. Castro Silva\footnote{Partially supported by University of Brasilia Edital DPI/DPG nº 04/2024 and by FEMAT Edital 01/2024.}, F. Manfio\footnote{Supported by FAPESP, Grant 2022/16097-2.}}
\begin{document}

\maketitle



\begin{abstract}
We will present an estimate for the first eigenvalue of the Dirichlet and Neumann problems in terms of the Bakry-Émery Ricci curvature for a compact weighted manifold. As an application we will establish a stability condition for a $h$-minimal hypersurface.
\end{abstract}

\noindent
\textit{2010 Mathematics Subject Classification: 35P15, 53C23, 53C42, 58K25.}\\
\textit{Key words}: weighted manifolds, $h$-minimal hypersurface, stability condition

\section{Introduction}


Manifolds with density have long appeared in mathematics, with more recent attention to differential geometry, where in many situations it is natural to consider a {\em weighted measure} of the form $e^{-h}dv$ on a Riemannian manifold $(M^{n}, g)$. Here, $h$ is a smooth function on $M$, referred to as the {\em weight function}. A {\em weighted manifold} $(M^{n}, g, e^{-h}dv)$, also knwon as a {\em manifold with density}, is defined as an $n$-dimensional Riemannian manifold $(M^{n}, g)$ equipped with a weighted volume form $e^{-h}dv$, where $dv$ is the volume element induced by the metric $g$.

The study of weighted manifolds was initiated by Bakry-Émery \cite{BE}, providing an extension of Riemannian geometry where many classical questions are being analyzed in recent years (see \cite{M,S,SS,VC,MW, YS, WW}). Grigori Perelman \cite{PG} introduced a functional that involves integrating the scalar curvature with respect to a weighted measure. The Ricci flow is, consequently, a gradient flow of such a functional. Motivated by this concept and supported  by Perelman's work  and the theory of optimal transport \cite{VC}, Ma and Du \cite{MD} extended the Reilly formula for the drifting Laplacian operator, associated with the weighted measure and Bakry-Émery Ricci tensor, on a compact Riemannian manifold with boundary. As an application, they derived  estimates for the first eigenvalue of the drifting Laplacian on manifolds with boundary.

A powerful tool in the study of weighted manifolds is the $\infty$-Bakry-Émery Ricci curvature tensor (Bakry-Émery Ricci curvature, for simplicity), which is defined as
\be\label{1.1}
Ric_{h} := Ric + \nabla^{2} h,
\en
where  $\nabla^{2} h$ is the Hessian of $h$ on $M$.  If $h$ is constant, $Ric_{h}$ reduces to the Ricci curvature, making the     Bakry-Émery Ricci curvature a generalization of the Ricci curvature. A Riemannian manifold $(M,g)$ is called a {\em gradient Ricci soliton} if $Ric_{h} = c g$ for some constant $c$. Thus, gradient Ricci solitons can
be viewed as weighted manifolds, representing a generalization of Einstein manifolds.

Over the last years, there has been an active research in the study of smooth manifolds with Bakry–Émery curvature bounded below. Much research has focused on establishing results analogous to those in the case of Ricci curvature bounded below, particularly those that link geometric conditions on the Ricci curvature with eigenvalue estimates for certain operators. One notable example is the following well-known result.


\

\noindent \textbf{Theorem (Lichnerowicz-Obata).} \textit{If the Ricci curvature of a compact Riemannian manifold $(M^{n},g)$ satisfies $Ric \geq a > 0$, then the first eigenvalue $\lambda_{1}$ of the Laplacian operator satisfies $\lambda_{1}\geq\dfrac{na}{n-1}$, with the equality holding if and only if $M$ is isometric to the unit $n$-dimensional sphere.}

\

The Lichnerowicz-Obata's theorem was extended by Reilly \cite{R} for the Dirichlet problem on a compact Riemannian manifold with smooth boundary, under the condition that the mean curvature of the boundary is nonnegative. More recently, Ma and Du \cite{MD} extended this result for a compact weighted  manifold with smooth boundary, addressing both the Dirichlet and Neumann problems (see equation \eqref{1.3}),  provided that the weighted mean curvature of the boundary is nonnegative  or if the boundary is convex, respectively.

Concerning to the weighted measure, the corresponding weighted Dirichlet energy functional is given by
\[
E_{h}(f)=\int_{M}|\nabla f|^{2}e^{-h}dv.
\]
Just as the Laplacian operator $\triangle$ is associated with the Dirichlet energy, the Euler-Lagrange operator of $E_{h}(f)$ is  known as the {\em weighted Laplacian} (also called the  {\em drifting Laplacian}) $\triangle_{h}$, which is given by
\be\label{1.2}
\triangle_{h}f:= \triangle f-\langle\nabla h,\nabla f\rangle.
\en
Note that it is a  second-order self-adjoint operator on $L^{2}(e^{-h}dv)$, the space of square integrable functions on $M$ with respect to the measure $e^{-h}dv$. Its fundamental importance arises from its relationship between fundamental gaps in the classical Laplacian operator on manifolds. We define the eigenvalue problems $(D)$ and $(N)$, which correspond to the Dirichlet and Neumann boundary conditions, respectively,
\be\label{1.3}
(D) \ \left\{ \begin{array}{ll}
\triangle_{h} f \ =- \lambda f \ \ \ in \ M,   & \\
\ \ \ \ \ f=0  \ \ \ \ \ on \ \partial M.  & \\
\end{array} \ \ \ \ \
(N) \ \left\{ \begin{array}{ll}
\triangle_{h} f \ =- \lambda f \ \ \ in \ M,   & \\
\ \ \  \frac{\partial f}{\partial \eta}=0  \ \ \ \ \ \ on \ \partial M. & \\
\end{array}   \right. \right.
\en
It is a well-known fact that the spectrum of problems $(D)$ and $(N)$ behaves like a non-decreasing sequence of real numbers in the following way:
\be
\no 0 < \lambda_{1} \leq \lambda_{2} \leq \cdot  \cdot  \cdot \rightarrow +\infty.
\en

Throughout this paper, we will denote an $n$-dimensional weighted manifold $(M^{n}, g, e^{-h}dv)$ by $M_{h}$, the weighted mean curvature of the boundary (which will be defined later) by $H^{\partial M_{h}}_{h}$, and the first eigenvalues of $(D)$ and $(N)$ by $\lambda_{1,D}$ and $\lambda_{1,N}$, respectively. Ma-Du's theorem can be stated as follows.

\

\noindent\textbf{Theorem (Ma-Du).} \textit{Let $M_{h}$ be an $n$-dimensional weighted compact Riemannian manifold with smooth boundary $\partial M_{h}$, and suppose that
\begin{equation}\label{1.4}
Ric_{h}\geq\left(\dfrac{|\nabla h|^{2}}{m-n}+a\right)g,
\end{equation}
for some constants $a>0$ and $m>n$. The following statements hold:}
\begin{enumerate}
\item[$(1)$] \textit{If the weighted mean curvature of the boundary satisfies  $H^{\partial M_{h}}_{h}\geq0$, then
$\lambda_{1,D}\geq\dfrac{ma}{m-n}$.}

\item[$(2)$] \textit{If $\partial M_{h}$ is convex, that is, the second fundamental form is non-negative, then
$\lambda_{1,N}\geq \dfrac{ma}{m-n}$.}
\end{enumerate}

Li and Wei \cite{LW} proved that this result is sharp, in the sense that the equality for $\lambda_{1,D}$ and $\lambda_{1,N}$ in Ma-Du's Theorem is achieved if and only if $M_{h}$ is isometric to a Euclidean hemisphere. Therefore, the results of Ma-Du and Li-Wei extend the rigidity theorems of Reilly \cite{R} and Escobar \cite{E}. Other authors such as Cheeger \cite{CH, CH2}, Aubin \cite{A}, Cheng \cite{CY} and Yau \cite{Y} obtained estimates for the first eigenvalue of the Laplacian related with geometric quantities such as volume, radius of injectivity, diameter and scalar curvature.

In this paper, we will obtain an important relationship between $\lambda_{1,D}$ and $\lambda_{1,N}$ and the modified Ricci tensor, which we will now define. We will also provide an interesting application for the estimate obtained. Our objective is to investigate how lower bounds for a suitably modified Ricci tensor influence the spectral behavior of geometric operators.

For a positive constant $c$ and $h\in C^{\infty}(M)$, we define the modified Bakry-Émery Ricci tensor  as
\be\label{Rhc}
 Ric_{h,c}=Ric_{h}-c|\nabla h|^{2}g,
\en
and the infimum of the  $Ric_{h,c}$ we will denote by
\be\label{infRhc}
k_{h,c}:=\displaystyle\inf_{p\in M}\lambda_{\min}(Ric_{h}-c|\nabla h|^{2}g)(p).
\en

 The tensor $Ric_{h,c}$ can be viewed as a modification of the Bakry-Émery Ricci tensor, incorporating the influence of the weight function $h$ through both its Hessian and gradient terms, and the quantity  $k_{h,c}$ provides a natural lower curvature bound associated with the weighted geometry of $M_{h}$. In particular, when $h$ is constant, this tensor reduces to the classical Ricci tensor, showing that $k_{h,c}$ extends the notion of a Ricci curvature lower bound to the weighted setting. Consequently,  $Ric_{h,c}$ and $k_{h,c}$ capture the combined effects of the Riemannian geometry and the underlying density structure. Our first result provides a lower bound for the first eigenvalues $\lambda_{1,D}$ and $\lambda_{1,N}$ in terms of the  tensor $Ric_{h,c}$.
\begin{teo} \label{teo:1}
Let $M_{h}$ be an  $n$-dimensional weighted compact Riemannian manifold with nonempty smooth boundary $\partial M_{h}$. Assume  there exists a constant $c$ satisfying  $Ric_{h,c}>0$. The following statements hold:
\begin{enumerate}
\item[$(1)$] If the weighted mean curvature of the boundary satisfies $H^{\partial M}_{h}\geq0$, then
$\lambda_{1,D}> k_{h,c}$.

\item[$(2)$] If $\partial M_{h}$ is convex, that is, the second fundamental form is non-negative, then
$\lambda_{1,N}> k_{h,c}$.

\end{enumerate}
\end{teo}

Since the estimates in $\lambda_{1,D}$ and $\lambda_{1,N}$ are strictly larger, an interesting question areises: what would be the optimal inequality involving the tensor $Ric_{h,c}$ and the weight function for any positive constant $c$ ? The Theorem 1 can be viewed as an extension of the eigenvalue estimates of Ma and Du. Indeed, their curvature  in \eqref{1.4} implies $Ric_{h,\frac{1}{m-n}}\geq ag>0$, hence our theorem applies. In contrast, our result requires only the positivity of the modified Bakry–Émery tensor $Ric_{h,c}$ for an arbitrary constant $c>0$, without introducing the dimensional parameter $n$. Our estimate is formulated directly in terms of the intrinsic quantity $Ric_{h,c}$ rather than an a priori lower bound $a$.

If $h$  is constant,  in \eqref{1.2} we have $\triangle_{h}=\triangle$ and the constant $k=\displaystyle\inf_{p\in M}\lambda_{\min}(Ric)(p)$. Thus the eingenvalue problems in \eqref{1.3} can be considered for the  Laplacian operator $\triangle$ in the Dirichlet and Neumann problems, leading to the following corollary.
\begin{Corolario}
Let $M$ be an $n$-dimensional compact Riemannian manifold with smooth boundary $\partial M$ and $Ric>0$. The following statements hold:
\item[$(1)$] If the  mean curvature of the boundary satisfies $H\geq0$, then
$\lambda_{1,D}> k$.

\item[$(2)$] If $\partial M$ is convex, that is, the second fundamental form is non-negative, then
$\lambda_{1,N}>k$.
\end{Corolario}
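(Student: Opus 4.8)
The plan is to obtain the Corollary as a direct specialization of Theorem \ref{teo:1} to a constant weight. First I would record the algebraic consequences of $h$ being constant: then $\nabla h \equiv 0$, hence $\nabla^{2}h \equiv 0$, so \eqref{1.1} gives $Ric_{h} = Ric$ and \eqref{1.2} gives $\triangle_{h} = \triangle$. In particular the eigenvalue problems $(D)$ and $(N)$ of \eqref{1.3} reduce to the classical Dirichlet and Neumann problems for the Laplacian, and $\lambda_{1,D},\lambda_{1,N}$ become their first eigenvalues. Likewise, the weighted mean curvature $H^{\partial M}_{h}$ of the boundary collapses to the ordinary mean curvature $H$, so the hypotheses ``$H \geq 0$'' and ``$\partial M$ convex'' in the Corollary are exactly the boundary hypotheses of Theorem \ref{teo:1}.

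Second, I would check that the curvature hypotheses of Theorem \ref{teo:1} hold. The assumption $Ric > 0$ of the Corollary is precisely $Ric_{h} > 0$. Since $|\nabla h|^{2} \equiv 0$, the auxiliary inequality \eqref{1.5} is satisfied for every constant $c$ — for instance, taking $c = 0$ one has $Ric_{h} > 0 = c|\nabla h|^{2}$ — so both standing assumptions of Theorem \ref{teo:1} are in force.

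Finally I would invoke Theorem \ref{teo:1}. Under assumption $(1)$ it yields $\lambda_{1,D} > Ric_{h} - c|\nabla h|^{2} = Ric$, and under assumption $(2)$ it yields $\lambda_{1,N} > Ric_{h} - c|\nabla h|^{2} = Ric$, which are the two claimed inequalities; note that the actual value of $c$ is irrelevant here precisely because $|\nabla h|^{2}$ vanishes identically. I do not anticipate a genuine obstacle, since this is a routine specialization; the only point that deserves a line of justification is that the sign and normalization conventions adopted later in the paper for $H^{\partial M}_{h}$ indeed reduce to the classical mean curvature $H$ when $h$ is constant, which is immediate once that definition has been given.
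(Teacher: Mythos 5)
Your proposal is correct and is essentially the paper's own derivation: the authors obtain the Corollary precisely by observing that for constant $h$ one has $\nabla h\equiv 0$, $Ric_h=Ric$, $\triangle_h=\triangle$, and the weighted boundary mean curvature reduces to $H$, so Theorem \ref{teo:1} applies verbatim. The only cosmetic caveat is that the proof of Theorem \ref{teo:1} picks $m\geq n+\frac{1}{c}$, so it is cleaner to choose some $c>0$ rather than $c=0$; since $c|\nabla h|^2\equiv 0$ either way, the conclusion is unaffected.
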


An important concept associated with weighted manifolds is that of stability (see \cite{BM},  \cite{CMZ2}, \cite{CMZ}, \cite{L}, among others), which will be described in Section \ref{sec:tability}. As an application of Theorem \ref{teo:1}, we obtain  a stability result for a compact $h$-minimal hypersurface with smooth boundary and nonnegative weighted mean curvature of the boundary, under a control condition in the infimum of the $Ric_{h,c}$.
Let $A$ be the second fundamental of the immersion.

\begin{teo} \label{teo:2}
Let $M_{h}$ be an $n$-dimensional compact $h$-minimal hypersurface  with  mean curvature $H\neq0$, isometrically immersed in a weighted manifold  $\overline{M}_{h}$. Suppose that the Hessian of weight function $h$ is a parallel tensor on $\overline{M}_{h}$, that is, $\overline{\nabla}(\overline{\nabla}^{2} h)\equiv 0$. If the mean curvature $H_{h}^{\partial M_{h}}\geq0$ and
\be\label{1.6}
k_{h,c} \geq 2\left(|A|^{2}+\dfrac{1}{H^{2}}\left(|\overline{\nabla}^{2}h|^{2}+|\nabla H|^{2}\right)\right),
\en
then $M_{h}$ is $L_{h}$-stable.
\end{teo}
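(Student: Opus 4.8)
The plan is to use the variational characterization of $L_h$-stability, perform the substitution by the mean curvature function $H$, and reduce everything to a weighted eigenvalue comparison to which Theorem \ref{teo:1} applies. Recall that, $M_h$ being $h$-minimal, the Jacobi operator of the weighted area is $L_h u=\triangle_h u+(|A|^2+\overline{Ric}_h(N,N))u$, where $N$ is a unit normal, and that $M_h$ is $L_h$-stable precisely when
\[
I_h(f):=\int_M\big(|\nabla f|^2-(|A|^2+\overline{Ric}_h(N,N))f^2\big)e^{-h}dv\geq 0
\]
for every $f$ with $f|_{\partial M}=0$. Since $H\neq 0$ on the compact hypersurface $M$, $H$ has constant sign on each component, so I may choose $N$ so that $H>0$ on $\overline{M}$. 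The $h$-minimality is equivalent to $H=\langle\overline{\nabla}h,N\rangle$, which singles out $H$ as the natural positive test direction: writing $f=H\psi$ (admissible, since then $\psi|_{\partial M}=0$) and integrating by parts with respect to $e^{-h}dv$ gives the identity
\[
I_h(H\psi)=\int_M H^2|\nabla\psi|^2 e^{-h}dv-\int_M \psi^2\,H\,L_hH\,e^{-h}dv .
\]
Since $H^2e^{-h}dv=e^{-\tilde h}dv$ with $\tilde h:=h-2\log H$, the first integral is the weighted Dirichlet energy of $\psi$ on the weighted manifold $(M,g,e^{-\tilde h}dv)$, hence $\geq\tilde\lambda_{1,D}\int_M\psi^2 e^{-\tilde h}dv$, where $\tilde\lambda_{1,D}$ is the first Dirichlet eigenvalue of $\triangle_{\tilde h}$ on $M$; therefore $I_h(H\psi)\geq\int_M\psi^2 H^2\big(\tilde\lambda_{1,D}-H^{-1}L_hH\big)e^{-h}dv$, and it suffices to prove $L_hH\leq\tilde\lambda_{1,D}\,H$ on $M$.

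The core of the argument is then a Simons-type identity for $\triangle_h H$ along the $h$-minimal hypersurface. Differentiating $H=\langle\overline{\nabla}h,N\rangle$ twice and using the Gauss and Codazzi equations, the a priori obstructing ambient curvature enters only in the form $\overline{Ric}(\overline{\nabla}h,\cdot)$; here the hypothesis $\overline{\nabla}(\overline{\nabla}^2h)\equiv 0$ is decisive. Indeed, it forces $\overline{\Delta}h=\mathrm{tr}(\overline{\nabla}^2h)$ to be constant and $\mathrm{div}(\overline{\nabla}^2h)\equiv 0$, so the contracted Ricci identity $\mathrm{div}(\overline{\nabla}^2h)(Y)=\overline{\nabla}_Y(\overline{\Delta}h)+\overline{Ric}(\overline{\nabla}h,Y)$ yields $\overline{Ric}(\overline{\nabla}h,\cdot)\equiv 0$. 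Feeding this back, the normal ambient Ricci disappears from $\triangle_h H$, the Gauss equation brings in the intrinsic Bakry–Émery curvature $Ric_h$ of $M$, and — together with the relation $Ric_{\tilde h}=Ric_h-2H^{-1}\nabla^2H+2H^{-2}\,dH\otimes dH$ produced by $\tilde h=h-2\log H$ — Young's inequality applied to the remaining cross terms produces exactly the quantities $|A|^2$, $|\nabla h|^2$ and $H^{-2}(|\overline{\nabla}^2h|^2+|\nabla H|^2)$ occurring in \eqref{1.6}. This is intended to yield the pointwise bound $L_hH\leq\big(Ric_{\tilde h}-c|\nabla\tilde h|^2\big)H$.

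It remains to compare $Ric_{\tilde h}-c|\nabla\tilde h|^2$ with $\tilde\lambda_{1,D}$, and this is where Theorem \ref{teo:1} enters: the bound \eqref{1.6}, rewritten through $\tilde h=h-2\log H$ and the formula for $Ric_{\tilde h}$, is exactly what is needed for $(M,g,e^{-\tilde h}dv)$ to satisfy the hypotheses of Theorem \ref{teo:1}(1) — $Ric_{\tilde h}>0$ (using $H\neq 0$, so $|A|^2>0$), $Ric_{\tilde h}>c|\nabla\tilde h|^2$, and nonnegative weighted mean curvature of $\partial M$ inherited from $H_h^{\partial M_h}\geq 0$. Theorem \ref{teo:1}(1) then gives $\tilde\lambda_{1,D}>Ric_{\tilde h}-c|\nabla\tilde h|^2\geq H^{-1}L_hH$, whence $I_h(H\psi)\geq 0$ for every admissible $\psi$, i.e. $M_h$ is $L_h$-stable. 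The hard part will be the Simons-type identity of the second paragraph: deriving it cleanly with all Gauss–Codazzi and drift contributions accounted for, and checking that the error terms it produces — once combined with the correction coming from the passage to $\tilde h$ — are dominated precisely by the right-hand side of \eqref{1.6}, including its factor $2$ and the coefficient of $H^{-2}(|\overline{\nabla}^2h|^2+|\nabla H|^2)$; verifying that the boundary hypothesis survives the change of weight is a further, minor, point.
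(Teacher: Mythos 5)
Your overall strategy---test the stability form with $fH$, reduce to an eigenvalue comparison, and invoke Theorem \ref{teo:1}---is the right one, and your opening identity $I_h(H\psi)=\int_M H^2|\nabla\psi|^2\,dv_h-\int_M\psi^2 HL_hH\,dv_h$ is correct, as is the observation that $\overline{\nabla}(\overline{\nabla}^2h)\equiv0$ forces $\overline{Ric}(\overline{\nabla}h,\cdot)\equiv0$ via the contracted commutation formula. But the proposal has two genuine gaps. First, the entire analytic core---the Simons-type formula for $L_h(fH)$ (equivalently for $\triangle_h H$) on an $h$-minimal hypersurface, obtained by differentiating $H=\langle\overline{\nabla}h,\nu\rangle$ twice and tracking all Codazzi and commutation terms---is deferred as ``the hard part.'' This computation is most of the work: in the paper it occupies a separate Proposition (equation \eqref{2.17}), and it is only after it is established that the hypothesis $\overline{\nabla}(\overline{\nabla}^2h)\equiv0$ kills the third-derivative terms and Young's inequality produces exactly $|A|^2H^2f^2+|\overline{\nabla}^2h|^2f^2$. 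Without it you have a plan, not a proof.

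Second, the change of weight $\tilde h=h-2\log H$ is where your route actually breaks, not merely where a ``minor point'' remains. To apply Theorem \ref{teo:1}(1) on $(M,g,e^{-\tilde h}dv)$ you need $H^{\partial M}_{\tilde h}=H^{\partial M}_h+2\,\partial_\eta(\log H)\geq0$, and the extra term $2H^{-1}\partial_\eta H$ is not controlled by the hypothesis $H^{\partial M_h}_h\geq0$. Worse, $Ric_{\tilde h}=Ric_h-2H^{-1}\nabla^2H+2H^{-2}\,dH\otimes dH$ contains the full Hessian $\nabla^2H$, whereas \eqref{1.6} only bounds $|\nabla H|^2$; so neither $Ric_{\tilde h}>0$ nor $Ric_{\tilde h}>c|\nabla\tilde h|^2$ can be extracted from the stated hypotheses, and the claimed pointwise bound $L_hH\leq(Ric_{\tilde h}-c|\nabla\tilde h|^2)H$ has no visible source. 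The paper avoids all of this by never changing the weight: it keeps $h$, uses the Rayleigh characterization $\lambda_{1,D}\int_{M_h}(fH)^2dv_h\leq\int_{M_h}|\nabla(fH)|^2dv_h\leq2\int_{M_h}\bigl(f^2|\nabla H|^2+H^2|\nabla f|^2\bigr)dv_h$ (this Young step is precisely where the factor $2$ in \eqref{1.6} comes from), and then applies Theorem \ref{teo:1} with the original weight, for which $H^{\partial M_h}_h\geq0$ is exactly the given hypothesis and no Hessian of $H$ ever appears. You should either carry out your program with these two obstructions resolved---which I do not see how to do from \eqref{1.6} alone---or switch to the direct comparison with the unmodified weight.
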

The  Theorem 2 provides geometric consequences that relate to and refine several classic results known in the literature. In condition \eqref{1.6} we see how an infimum control on the modified Ricci tensor and the mean curvature of the immersion relate to the weight function to ensure its stability in the weighted context.

\section{Preliminaries and Proof of the Main Results}

In this section we recall some basic definitions and results that
are used in order to prove the results. We denote by $\nabla$ and
$Ric$ the Levi-Civita connection and the Ricci curvature tensor of $(M^{n}, g)$, respectively. Moreover, we denote $dv_{h}=e^{-h}dv$.

\subsection{The Ricci Bakry-Émery Curvature and the First Engenvalue}
Given an $n$-dimensional Riemannian manifold $(M^{n},g)$ and assuming that $f\in C^{\infty}(M)$, the well-known Bochner formula provides the following expression for the Laplacian of $f$
\be\no
\begin{aligned}
\frac{1}{2}\triangle |\nabla f|^{2}=|\nabla^{2} f|^{2}+\langle\nabla f,\nabla \triangle f \rangle
+Ric(\nabla f,\nabla f),
\end{aligned}
\en
where $\triangle f$, $\nabla f$ e $\nabla^{2} f$ are the Laplacian, the gradient and the hessian of $f$ on $M$, respectively. For a weighted manifold, the Bochner's formula takes a following form (cf. \cite{MD}):

\be\label{2.1}
\begin{aligned}\label{bochner_formula}
\frac{1}{2}\triangle_{h} |\nabla f|^{2}=|\nabla^{2} f|^{2}+\langle\nabla f,\nabla \triangle_{h} f \rangle
+Ric_{h}(\nabla f,\nabla f).
\end{aligned}
\en

The following result, which follows from \eqref{bochner_formula}, will be useful in the proof of Theorem \ref{teo:1}.

\begin{proposition}\label{Proposition 2.1}
Let $M_{h}$ be an $n$-dimensinal compact  weighted manifold with smooth boundary, and let $\eta$ be the unit normal vector field on the boundary $\partial M_{h}$. Given $f\in C^{\infty}(M_{h})$, we have
\begin{eqnarray} \label{2.2}
\begin{aligned}
\dfrac{1}{m}\int_{M}(\triangle_{h}f)^{2}dv_{h}
 &+\int_{M}\langle\nabla f,\nabla \triangle_{h} f \rangle dv_{h}-\dfrac{1}{m-n}\int_{M}\mid
 \nabla f\mid ^{2}\mid
 \nabla h\mid^{2}dv_{h} \\
&+\int_{M}Ric_{h}(\nabla f,\nabla f) dv_{h}\leq
\dfrac{1}{2}\int_{\partial M}\langle\nabla|\nabla f|^{2},\eta\rangle da_{h},
\end{aligned}
\end{eqnarray}
where $m>n$ is a constant.
\end{proposition}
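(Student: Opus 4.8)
The plan is to integrate the weighted Bochner formula \eqref{bochner_formula} over $M_{h}$ against $dv_{h}$ and then bound the Hessian term from below by the curvature-free quantities appearing in \eqref{2.2}. First I would invoke the weighted divergence theorem: for any smooth vector field $X$ on $M_{h}$,
\[
\int_{M_{h}}\dive_{h}X\,dv_{h}=\int_{\partial M_{h}}\langle X,\eta\rangle\,da_{h},
\qquad \dive_{h}X:=\dive X-\langle\nabla h,X\rangle,\quad da_{h}:=e^{-h}da,
\]
which holds because $e^{-h}\dive_{h}X=\dive(e^{-h}X)$ and $M_{h}$ is compact with smooth boundary. Since $\triangle_{h}u=\dive_{h}(\nabla u)$, applying this with $u=\tfrac12|\nabla f|^{2}$ converts the integral of the left-hand side of \eqref{bochner_formula} into the boundary term $\tfrac12\int_{\partial M_{h}}\langle\nabla|\nabla f|^{2},\eta\rangle\,da_{h}$. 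Integrating \eqref{bochner_formula} therefore yields the identity
\[
\frac12\int_{\partial M_{h}}\langle\nabla|\nabla f|^{2},\eta\rangle\,da_{h}
=\int_{M_{h}}|\nabla^{2}f|^{2}\,dv_{h}
+\int_{M_{h}}\langle\nabla f,\nabla\triangle_{h}f\rangle\,dv_{h}
+\int_{M_{h}}Ric_{h}(\nabla f,\nabla f)\,dv_{h}.
\]

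The crux is then a pointwise lower bound for $|\nabla^{2}f|^{2}$. I would start from the Cauchy--Schwarz inequality $|\nabla^{2}f|^{2}\ge\tfrac{1}{n}(\triangle f)^{2}$, write $\triangle f=\triangle_{h}f+\langle\nabla h,\nabla f\rangle$, and use the elementary identity, valid for all $a,b\in\R$ and any $m>n$,
\[
\frac{(a+b)^{2}}{n}-\frac{a^{2}}{m}+\frac{b^{2}}{m-n}
=\frac{m-n}{mn}\Bigl(a+\frac{m}{m-n}\,b\Bigr)^{2}\ \ge\ 0,
\]
with $a=\triangle_{h}f$ and $b=\langle\nabla h,\nabla f\rangle$. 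Combining this with $\langle\nabla h,\nabla f\rangle^{2}\le|\nabla h|^{2}|\nabla f|^{2}$ gives the pointwise estimate
\[
|\nabla^{2}f|^{2}\ \ge\ \frac{(\triangle_{h}f)^{2}}{m}-\frac{|\nabla h|^{2}|\nabla f|^{2}}{m-n}.
\]
Integrating this against $dv_{h}$ and substituting it into the displayed identity produces exactly \eqref{2.2}.

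I do not expect a genuine obstacle: the only step requiring care is the algebraic inequality, where one checks that the quadratic form $\tfrac{(a+b)^{2}}{n}-\tfrac{a^{2}}{m}+\tfrac{b^{2}}{m-n}$ in $a$ has vanishing discriminant, hence is a nonnegative perfect square, the hypothesis $m>n$ being precisely what makes its coefficients have the correct signs. One should also note that, because $M_{h}$ is compact with smooth boundary and $f$ is smooth, all integrations by parts are legitimate and no boundary contribution is lost beyond the single term retained on the right-hand side of \eqref{2.2}.
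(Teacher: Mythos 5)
Your proposal is correct and follows essentially the same route as the paper: integrate the weighted Bochner formula, convert the left-hand side into the boundary term via the divergence theorem, and bound $|\nabla^{2}f|^{2}$ below using $|\nabla^{2}f|^{2}\geq\tfrac{1}{n}(\triangle f)^{2}$ together with the algebraic inequality $\tfrac{1}{n}(a+b)^{2}\geq\tfrac{a^{2}}{m}-\tfrac{b^{2}}{m-n}$ and Cauchy--Schwarz. Your version is in fact slightly cleaner, since applying the weighted divergence theorem directly to $\tfrac12\nabla|\nabla f|^{2}$ avoids the paper's detour through the (possibly non-smooth) quantity $|\nabla f|$, and you supply the perfect-square verification of the key algebraic inequality that the paper states without proof.
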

\begin{proof}
Substituting $\triangle_{h}$ given in \eqref{1.2} into \eqref{bochner_formula}, we obtain
\begin{eqnarray*}
\begin{aligned}
\frac{1}{2}\left(2 |\nabla f|\triangle |\nabla f|+2 |\nabla| \nabla f||^{2}-\langle\nabla|\nabla f|^{2},\nabla h \rangle\right)
& =
|\nabla^{2} f|^{2}+\langle\nabla f,\nabla \triangle_{h} f \rangle
\\
& +  Ric_{h}(\nabla f,\nabla f),
\end{aligned}
\end{eqnarray*}
\noindent
that is,
\be\label{2.3}
\begin{aligned}
|\nabla| \nabla f||^{2}  =  |\nabla^{2} f|^{2}+\langle\nabla f,\nabla \triangle_{h} f \rangle
+Ric_{h}(\nabla f,\nabla f) -|\nabla f|\triangle_{h}|\nabla f|.
\end{aligned}
\en
Integrating \eqref{2.3} with respect to the measure $dv_{h}$, we obtain
\be\label{2.4}
\begin{aligned}
\int_{M}|\nabla| \nabla f||^{2} dv_{h} & = \int_{M}|\nabla^{2} f|^{2}dv_{h}+\int_{M}\langle\nabla f,\nabla \triangle_{h} f \rangle dv_{h} \\
&+\int_{M}Ric_{h}(\nabla f,\nabla f) dv_{h} -\int_{M}|\nabla f|\triangle_{h}|\nabla f| dv_{h}.
\end{aligned}
\en
On the other hand, it follows from the divergence theorem applied to the vector field $X=|\nabla f|\nabla|\nabla f|$ that
\be\label{2.5}
\begin{aligned}
\int_{M}|\nabla| \nabla f||^{2} dv_{h}+\int_{M}|\nabla f|\triangle_{h}|\nabla f| dv_{h}&=&\dfrac{1}{2}\int_{\partial M}\langle\nabla|\nabla f|^{2},\eta\rangle da_{h},
\end{aligned}
\en
where $da_{h}=e^{-h}da$ and  $da$ is the volume element induced by the metric
$g$ on $\partial M_{h}$.
Therefore, by \eqref{2.4} and \eqref{2.5} we get
\be\label{2.6}
\begin{aligned}
\frac{1}{2}\int_{\partial M}\langle\nabla|\nabla f|^{2},\eta\rangle da_{h} & =  \int_{M}|\nabla^{2} f|^{2}dv_{h}
 +\int_{M}\langle\nabla f,\nabla \triangle_{h} f \rangle dv_{h} \\
&+\int_{M}Ric_{h}(\nabla f,\nabla f) dv_{h}.
\end{aligned}
\en
Let us estimate the term $|\nabla^{2}f|^{2}$. In fact, taking a constant $m>n$ and using \eqref{1.2}, we have
\be\label{2.7}
 |\nabla^{2} f|^{2}\geq \dfrac{1}{n}(\triangle f)^{2}=\dfrac{1}{n}(\triangle_{h}f+\langle \nabla f,\nabla h\rangle)^{2}\geq \dfrac{(\triangle_{h}f)^{2}}{m}-\dfrac{\langle \nabla f,\nabla h\rangle^{2}}{m-n}.
\en
Substituting \eqref{2.7} into \eqref{2.6}, and using the Cauchy-Schwarz's formula, we obtain
\be
\begin{aligned}
\frac{1}{m}\int_{M}(\triangle_{h}f)^{2}dv_{h} &-\dfrac{1}{m-n}\int_{M}\mid
 \nabla f\mid ^{2}\mid
 \nabla h\mid^{2}dv_{h}+\int_{M}\langle\nabla f,\nabla \triangle_{h} f \rangle dv_{h} \\
&\no
+\int_{M}Ric_{h}(\nabla f,\nabla f) dv_{h}\leq  \dfrac{1}{2}\int_{\partial M}\langle\nabla|\nabla f|^{2},\eta\rangle da_{h},
\end{aligned}
\en
and this concludes the proof.
\end{proof}

Given a hypersurface $i:M^n\to\overline{M}^{n+1}$ of a Riemannian manifold $M^n$ into another Riemannian manifold $\overline{M}^{n+1}$, the restriction of a smooth function $h\in C^{\infty}(\overline{M}^{n+1})$ on $M^n$, which will also be denoted by $h$, defines a weighted measure $e^{-h}dv$ on $M^n$, thus yielding an induced smooth metric measure space $(M^{n}, g, e^{-h}dv)$. To avoid confusion, we will use a bar to denote the geometric objects in the immersion environment, while the same objects without a bar will refer to the hypersurface.

Given a point $p\in M^n$, recall that the {\em second fundamental form} $A$ of $M^n$ at $p$, identified with the shape operator at $p\in M^n$, is the linear operator $A:T_pM\to T_pM$ given by
\[
AX = \overline\nabla_X\nu,
\]
for every $X\in T_pM$, where $\nu$ is a smooth unit normal
vector field along $i$, around $p$. In a local orthonormal
frame $\{e_1,\ldots,e_n\}$ of $M^n$, the components of $A$
are denoted by
\[
a_{ij} = \langle Ae_i,e_j\rangle = \langle\overline\nabla_{e_{i}}\nu,e_j\rangle,
\]
with $1\leq i,j\leq n$. The {\em mean curvature} $H$ of $M^n$
at $p$ is defined by
\[
H=Tr(A)=\sum_{i=1}^{n} a_{ii}.
\]


With the above notations, we have the following

\begin{definition}
{\em The {\em weighted mean curvature} $H_{h}$ of the hypersurface $M_{h}$ is defined by
\begin{equation}\label{2.8}
H_{h} = H-\langle\overline{\nabla} h,\nu\rangle,
\end{equation}
and  $M_{h}$ is called a $h$-{\em minimal hypersurface} if its mean curvature $H$ satisfies the condition
\[
H=\langle\overline{\nabla}h,\nu\rangle.
\]}
\end{definition}

It is known that $M_{h}$  is $h$-minimal if and only if it is a critical point of the
weighted volume functional  $V_{h}({M})$, defined by
\begin{equation*}
V_{h}(M):=\int_{M}e^{-h}dv.
\end{equation*}
Furthermore, $M_{h}$ being $h$-minimal in $(\overline{M}_{h},\overline{g})$ is equivalent to $(M,i^{\ast}\tilde{g})$ being minimal
in $(\overline{M},\tilde{g})$, where $\tilde{g}$ is the conformal metric given by $\tilde{g} = e^{-\frac{2h}{n}}\overline{g}$.

\vspace{.2cm}

From now on, we will assume that $M_{h}$ is a two-sided hypersurface, meaning there exists a smooth unit normal vector field $\nu$ along $M_{h}$.

\begin{lemma}\label{Lemma 2.3}
Let $i:M^{n}\to \overline{M}^{n+1}$ be a hypersurface. If
$f:\overline{M}\to\mathbb{R}$ is a smooth function, then
for each point $p\in M$, the following holds:
\be\label{2.9}
\begin{aligned}
 \overline{\triangle }f=\triangle f+Hf_{\nu}+\overline{\nabla}^{2}f(\nu,\nu).
\end{aligned}
\en
\end{lemma}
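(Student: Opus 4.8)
The plan is to compute the ambient Laplacian of $f$ as a metric trace and then split that trace into a part tangent to $M$ and a purely normal part, converting the tangential part into intrinsic data by means of the Gauss formula. Fix $p\in M$ and choose a local orthonormal frame $\{e_{1},\dots,e_{n}\}$ of $TM$ around $p$; adjoining the unit normal $\nu$ produces an orthonormal frame $\{e_{1},\dots,e_{n},\nu\}$ of $T\overline{M}$ along $M$ near $p$. Since $\overline{\triangle}f$ is the trace of the Hessian $\overline{\nabla}^{2}f$ over such a frame, evaluating at $p$ gives
\[
\overline{\triangle}f=\sum_{i=1}^{n}\overline{\nabla}^{2}f(e_{i},e_{i})+\overline{\nabla}^{2}f(\nu,\nu).
\]

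The next step is to rewrite the tangential terms. Writing $\overline{\nabla}^{2}f(e_{i},e_{i})=e_{i}(e_{i}f)-(\overline{\nabla}_{e_{i}}e_{i})f$ and using the Gauss formula to decompose $\overline{\nabla}_{e_{i}}e_{i}$ into its component tangent to $M$, namely $\nabla_{e_{i}}e_{i}$, plus a normal term whose coefficient is the $(i,i)$-entry of the second fundamental form (extracted from $\langle e_{i},\nu\rangle\equiv 0$ along $M$, which gives $\langle\overline{\nabla}_{e_{i}}e_{i},\nu\rangle=-\langle e_{i},\overline{\nabla}_{e_{i}}\nu\rangle=\langle Ae_{i},e_{i}\rangle$), and observing that a vector tangent to $M$ applied to $f$ only sees the restriction $f|_{M}$ — so that the purely tangential block of $\overline{\nabla}^{2}f$ is exactly the intrinsic Hessian of $f|_{M}$ on $M$ — one obtains, at $p$, an identity of the form $\overline{\nabla}^{2}f(e_{i},e_{i})=\nabla^{2}f(e_{i},e_{i})+a_{ii}f_{\nu}$, with $f_{\nu}=\nu(f)$. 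Summing over $i$, using $\sum_{i}a_{ii}=H$ together with $\sum_{i}\nabla^{2}f(e_{i},e_{i})=\triangle f$, and substituting back into the trace identity above then yields \eqref{2.9}.

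I do not expect any genuine obstacle here: the statement is a routine consequence of the Gauss formula, and no weighted or drifting terms enter, since $f$ is differentiated with the Levi-Civita connections only. The steps deserving care are the bookkeeping of tangential versus normal components — in particular, checking that tangential derivatives of $f$ depend only on $f|_{M}$, so that the tangential block of the ambient Hessian really coincides with the intrinsic Hessian of $f|_{M}$ — and keeping the signs in the Gauss–Weingarten relations consistent with the conventions adopted for $A$ and $H$, which is what pins down the coefficient of $f_{\nu}$ in \eqref{2.9}.
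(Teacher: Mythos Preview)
Your argument is correct and is the standard proof of this identity: split the trace of $\overline{\nabla}^{2}f$ along an adapted orthonormal frame, then use the Gauss formula to convert the tangential block into the intrinsic Hessian plus the second-fundamental-form term. The paper does not supply its own proof of Lemma~\ref{Lemma 2.3}; it simply cites \cite{CaMa}, so there is nothing further to compare.
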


The  Lemma \ref{Lemma 2.3} can be found in \cite{CaMa}.
The version of \eqref{2.9}  can be easily adapted to $\overline{\triangle} _{h}f$ as follows:
\be\label{2.10}
\begin{aligned}
 \overline{\triangle }_{h}f=\triangle _{h}f+H_{h}f_{\nu}+\overline{\nabla}^{2}f(\nu,\nu).
\end{aligned}
\en

Since the boundary $\partial M _{h}$ can be considered as a
hypersurface isometrically immersed in $M _{h}$, we will use
a bar to represent the geometric entities of $M _{h}$ to prove the Theorem 1.

\noindent
\textbf{\textit{Proof of Theorem 1.}}
For the item $(1)$, it follows from \eqref{2.2} that
\begin{eqnarray*}
\begin{aligned}
\frac{1}{m}&\int_{M}(\overline{\triangle}_{h}f)^{2}dv_{h}
 +\int_{M}\langle\overline{\nabla} f,\overline{\nabla} \bar{\triangle}_{h} f \rangle dv_{h}-\dfrac{1}{m-n}\int_{M }\mid
 \overline{\nabla} f\mid ^{2}\mid
 \overline{\nabla} h\mid^{2}dv_{h}\\
 &+\int_{M}\overline{R}ic_{h}(\overline{\nabla} f,\nabla f) dv_{h}\leq
\dfrac{1}{2}\int_{\partial M }\langle\overline{\nabla}|\overline{\nabla} f|^{2},\eta\rangle da_{h}.
\end{aligned}
\end{eqnarray*}
We assume that $f$ is an eigenfunction associated with the first nonzero eigenvalue $\lambda_{1,D}$ of the  Dirichlet problem $(D)$ in \eqref{1.3}. Since $f\mid_{\partial M}=0$, we have
\be\label{2.11}
\no\dfrac{1}{2}\int_{\partial M }\langle\overline{\nabla}|\overline{\nabla} f|^{2},\eta\rangle da_{h} &=&\int_{\partial M }\overline{\nabla}^{2}f(\eta,\overline{\nabla} f) da_{h}\\
 &=&\int_{\partial M }\overline{\nabla}^{2}f(\eta, \nabla f+f_{\eta}\eta) da_{h}\\
\no &=&\int_{\partial M }f_{\eta}\overline{\nabla}^{2}f(\eta, \eta) da_{h}.
\en
Since $\overline{\triangle}_{h}f=-\lambda_{1,D} f$  and   $f\mid_{\partial M _{h}}=0$, we have  $\triangle_{h}f=0$. Regardless
of the assumption that $H^{\partial M}_{h}\geq0$, and taking into account \eqref{2.10} and \eqref{2.11}, we obtain
\be\label{2.12}
 \dfrac{1}{2}\int_{\partial M }\langle\overline{\nabla}|\overline{\nabla} f|^{2},\eta\rangle da_{h}
=\int_{\partial M }( f_{\eta}\overline{\triangle}_{h} f- f_{\eta}\triangle_{h} f-H^{\partial M}_{h} f^{2}_{\eta}) da_{h}\leq0.
\en
Consequently, using \eqref{2.12}, the inequality \eqref{2.2} becomes
\begin{eqnarray} \label{2.13}
\begin{aligned}
\frac{\lambda_{1,D}^{2}}{m}&\int_{M }f^{2}dv_{h}-\dfrac{1}{m-n}\int_{M }\mid
\overline{ \nabla} f\mid ^{2}\mid
 \overline{\nabla} h\mid^{2}dv_{h }\\
&-\lambda_{1,D}\int_{M }\mid\overline{\nabla} f\mid^{2} dv_{h}
+\int_{M }\overline{R}ic_{h}(\overline{\nabla }f,\overline{\nabla} f) dv_{h}\leq0.
\end{aligned}
\end{eqnarray}
Now, we claim that $\lambda_{1,D}> k_{h,c}$. In fact, suppose that this inequality does not hold.
By the definition of $\overline{R}ic_{h,c}$ we have
\be
\begin{aligned}
\nonumber \nonumber \overline{R}ic_{h,c}(\overline{\nabla} f,\overline{\nabla} f)= \overline{R}ic_{h}(\overline{\nabla} f,\overline{\nabla} f)
-c\mid\overline{\nabla}h\mid^{2}\mid\overline{\nabla}f\mid^{2}.
\end{aligned}
\en
 The inequality \eqref{2.13} leads to
\be\label{2.14}
\begin{aligned}
 \frac{\lambda_{1,D}^{2}}{m}\int_{M}f^{2}dv_{h}-\dfrac{1}{m-n}\int_{M}\mid
\overline{ \nabla} f\mid ^{2}\mid\overline{\nabla }h\mid^{2}dv_{h}
 - \lambda_{1,D}\int_{M }\mid\overline{\nabla} f\mid^{2} dv_{h} \\
 +c\int_{M}\mid
\overline{ \nabla} f\mid ^{2}\mid\overline{\nabla }h\mid^{2}dv_{h}+\int_{M }\overline{R}ic_{h,c}(\overline{\nabla }f,\overline{\nabla} f) dv_{h}\leq0.
\end{aligned}
\en
Since that $ k_{h,c}|\overline{\nabla} f|^{2}\leq\overline{R}ic_{h,c}(\overline{\nabla }f,\overline{\nabla} f)$, choosing $m\in \mathbb{R}$ such that $m\geq n+\dfrac{1}{c}$, it follows from \eqref{2.14} that
\be
\begin{aligned}
\nonumber\dfrac{\lambda_{1,D}^{2}}{m}\int_{M}f^{2}dv_{h}+(k_{h,c} - \lambda_{1,D})\int_{M }\mid\overline{\nabla} f\mid^{2} dv_{h}\leq0,
\end{aligned}
\en
from which we conclude that
\be
\begin{aligned}
\nonumber\dfrac{\lambda_{1,D}^{2}}{m}\int_{M}f^{2}dv_{h}\leq0,
\end{aligned}
\en
and this is a contradiction, since $\lambda_{1,D}$ is always positive. For item  (2), we assume that $f$ is an eigenfunction associated with the first nonzero
eigenvalue $\lambda_{1,N}$ of the Neumann problem $(N)$ in (1.3). From the hypothesis, we have $f_{\eta}=0$, and thus $\overline{\nabla} f=\nabla f$. Therefore, from \eqref{2.11}, we have
\be
\no \dfrac{1}{2}\int_{\partial M }\langle\overline{\nabla}|\overline{\nabla} f|^{2},\eta\rangle da_{h} &=&\dfrac{1}{2}\int_{\partial M }\langle\overline{\nabla}|\nabla f|^{2},\eta\rangle da_{h}\\
\no &=&\int_{\partial M }\overline{\nabla}^{2}f(\eta,\nabla f) da_{h}\\
\no &=&\int_{\partial M}\langle \overline{\nabla}_{\nabla f} \overline{\nabla }f, \eta\rangle  da_{h}\\
\no &=&\int_{\partial M}(\nabla f\langle  \nabla f, \eta\rangle- \langle \nabla f,\overline{\nabla}_{\nabla f} \eta\rangle )da_{h}.
\en
Note that $\langle  \nabla f, \eta\rangle=0$ and $\langle \nabla f,\overline{\nabla}_{\nabla f} \eta\rangle=A^{\partial M _{h}}(\nabla f,\nabla f)$, where $A^{\partial M _{h}}$ is the second fundamental form of $\partial M _{h}$. Therefore,

\be
\no \dfrac{1}{2}\int_{\partial M }\langle\bar{\nabla}|\bar{\nabla} f|^{2},\eta\rangle da_{h}=- \int_{\partial M _{h}}A^{\partial M}(\nabla f,\nabla f) da_{h}\leq0.
\en
The remainder of the proof preceeds as in the conclusion of item (1), and this concludes the proof. \qed

\subsection{A stability condition for $h$-minimal hypersurfaces}
\label{sec:tability}

Given a hypersurface $i:M^{n}\to\overline{M}^{n+1}$, the $L_{h}$-{\em stability operator} of $M _{h}$ is given by
\be\label{2.16}
L_{h}:= \triangle_{h} + |A|^{2} + \overline{R}ic_{h} (\nu, \nu),
\en
where $|A|^{2}$ denotes the square of norm of the second fundamental form of $M _{h}$ and $\nu$ is an unit normal vector field to $M _{h}$.

\begin{definition} \label{definition 3.1}
{\em A two-sided $h$-minimal hypersurface $M _{h}$ is said to be $L_{h}$-{\em stable} if, for any compactly supported smooth function $\varphi\in C^{\infty}_{0}(M _{h})$, the following inequality  holds
\begin{equation*}
-\int_{M } \varphi L_{h}\varphi e^{-h} dv\geq 0.
\end{equation*}
Equivalently,
\begin{equation*}
\int_{M } [|\varphi|^{2}-(|A|^{2}+\overline{R}ic_{h}(\nu,\nu))\varphi^{2}]e^{-h} dv\geq 0.
\end{equation*}}
\end{definition}

The stability of $M _{h}$ implies that the second variation of the weighted volume of an $h$-minimal hypersurface $M _{h}$ is nonnegative. The concept of $L_{h}$-stability has has been extensively developed in recent years (see \cite{CMZ2}, \cite{L} and references therein) and important results relating to the stability of $h$-minimal hypersurfaces with geometric and topological conditions have been obtained. Analogous to the concept of minimal immersion, it is known that an $h$-minimal hypersurface $(M _{h}, g)$ is $L_{h}$-stable if and only
if $(M, i^{\ast}\tilde{g})$ is stable as a minimal hypersurface in $(\overline{M},\tilde{g})$.

\vspace{.2cm}

It is convenient to use the following notation. If $\{e_1,\ldots,e_n\}$ is a local orthonormal frame on $M _{h}$ and $S=(S_{k_{1}\ldots k_{s}})$ is an $(s,0)$-tensor on $M _{h}$, the components of the covariant derivative $\nabla S$ of $S$ will be denoted by $S_{k_{1}\ldots k_{s},l}$, that is,
\[
S_{k_{1}\ldots k_{s},l}= ({\nabla}_{e_{l}}S)(e_{k_{1}},\ldots, e_{k_{s}}) = (\nabla S)(e_{l},e_{k_{1}},\ldots,e_{k_{s}}).
\]
Moreover, we say that $S$ is {\em parallel} on $M$ if $\nabla S  \equiv0$.


As an application of Theorem \ref{teo:1}, we will see how the stability of a compact $h$-minimal hypersurface $M _{h}$ can be related to its curvature. In order to do this, we will prove a result regarding the $L_{h}$-stability operator applied to the mean curvature of the hypersurface.

\begin{proposition}
{\em Let $(M _{h},g)$ be an $h$-minimal hypersurface into a smooth metric measure space $\overline{M}_{h}$. Then, the mean curvature $H$ of $M _{h}$ satisfies
\begin{eqnarray} \label{2.17}
\begin{aligned}
L_{h}(fH)&=f\left(2\sum_{i=1}^{n}(\overline{\nabla}^{3}h)_{i\nu i}-\sum_{i=1}^{n}(\overline{\nabla}^{3}h)_{\nu ii}+2\sum_{i,k=1}^{n}a_{ik}(\overline{\nabla}^{2}h)_{ki}\right)\\
&+ 2\sum_{i=1}^{n}e_{i}(H)\langle\overline{\nabla}f,e_{i}\rangle+H\triangle_{h}f,
\end{aligned}
\end{eqnarray}
for every $f\in C^{\infty}_{0}(M _{h})$, where $\{e_1,\ldots,e_n\}$ is a local orthonormal frame field on $M _{h}$ and $\nu$ is an unit normal vector field along $M _{h}$.}
\end{proposition}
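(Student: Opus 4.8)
The plan is to strip off the factor $f$ using the product rule for the drifting Laplacian, reducing the assertion to a pointwise Jacobi‑type identity for the mean curvature $H$ alone, and then to establish that identity by a direct tensor computation exploiting the $h$‑minimality relation $H=\langle\overline\nabla h,\nu\rangle$.

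\emph{Step 1: reduction.} Since $\triangle_h$ is a Schr\"odinger‑type operator, for any $f,H\in C^\infty(M_h)$ one has
\[
\triangle_h(fH)=f\,\triangle_h H+H\,\triangle_h f+2\langle\nabla f,\nabla H\rangle ,
\]
where $\nabla$ is the intrinsic gradient of $M_h$; adding the zeroth‑order part of $L_h$ gives $L_h(fH)=f\,L_hH+H\,\triangle_h f+2\langle\nabla f,\nabla H\rangle$. As $\langle\nabla f,\nabla H\rangle=\sum_i e_i(H)\langle\overline\nabla f,e_i\rangle$, it suffices to prove the pointwise identity
\[
L_hH=2\sum_{i}(\overline\nabla^{3}h)_{i\nu i}-\sum_{i}(\overline\nabla^{3}h)_{\nu ii}+2\sum_{i,k}a_{ik}(\overline\nabla^{2}h)_{ki}.
\]
Throughout we fix $p\in M$, use a local orthonormal frame $\{e_1,\dots,e_n\}$ geodesic at $p$, and use that, $M_h$ being $h$‑minimal, $H=\langle\overline\nabla h,\nu\rangle$ on all of $M$.

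\emph{Step 2: differentiating $H=\langle\overline\nabla h,\nu\rangle$.} Differentiating once along $e_i$ and using the Weingarten relation $\overline\nabla_{e_i}\nu=-Ae_i$ gives $e_i(H)=(\overline\nabla^{2}h)(e_i,\nu)-\langle Ae_i,\nabla h\rangle$. Differentiating this a second time, invoking the Gauss formula $\overline\nabla_{e_i}e_j=\nabla_{e_i}e_j+a_{ij}\nu$ (with $\nabla_{e_i}e_j|_p=0$), and tracing, I obtain $\triangle H$ at $p$ as a sum of: a trace $\sum_i(\overline\nabla^{3}h)_{i\nu i}$ of the third covariant derivative of $h$; the term $H\,(\overline\nabla^{2}h)(\nu,\nu)$; the quadratic terms $-|A|^{2}H$ and $-2\sum_{i,k}a_{ik}(\overline\nabla^{2}h)_{ki}$; and a divergence term $\langle\operatorname{div}A,\nabla h\rangle$, with $(\operatorname{div}A)_k=\sum_i a_{ik,i}$.

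\emph{Step 3: Codazzi, the Ricci commutation identity, and the zeroth‑order term.} The traced Codazzi equation, $\operatorname{div}A=\nabla H+\overline{Ric}(\nu,\cdot)^{\top}$ (up to the chosen sign for $A$), converts $\langle\operatorname{div}A,\nabla h\rangle$ into $\langle\nabla H,\nabla h\rangle+\overline{Ric}(\nu,\overline\nabla h)$; the first summand combines with the term $-\langle\nabla h,\nabla H\rangle$ arising when one writes $\triangle_hH=\triangle H-\langle\nabla h,\nabla H\rangle$. The remaining $\overline{Ric}(\nu,\overline\nabla h)$ is absorbed by the commutation identity for the Hessian applied to the one‑form $dh$, namely $\sum_i(\overline\nabla^{3}h)_{i\nu i}-\sum_i(\overline\nabla^{3}h)_{\nu ii}=-\overline{Ric}(\nu,\overline\nabla h)$ (again up to sign), which is exactly what turns the single trace $\sum_i(\overline\nabla^{3}h)_{i\nu i}$ into the symmetric combination appearing in the claim. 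Finally I add the zeroth‑order contribution $\bigl(|A|^{2}+\overline{Ric}_h(\nu,\nu)\bigr)H=\bigl(|A|^{2}+\overline{Ric}(\nu,\nu)+(\overline\nabla^{2}h)(\nu,\nu)\bigr)H$: the $|A|^{2}H$ and $\overline{Ric}(\nu,\nu)H$ pieces cancel the corresponding terms produced in Steps 2–3, and re‑expanding the surviving $\langle\nabla h,\nabla H\rangle$ via Step 2 cancels the leftover $(\overline\nabla^{2}h)(\nu,\nu)$ and Hessian cross terms, leaving exactly the right‑hand side of the identity, hence the proposition.

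\emph{Main obstacle.} The computation is purely local and uses only Gauss–Weingarten, Codazzi, and the Ricci identity, but it is markedly sign‑ and index‑sensitive: the conventions for the shape operator $A$, for Codazzi, and for the commutation of $\overline\nabla^{3}h$ all enter, and the delicate point is that the various $\langle\nabla h,\nabla H\rangle$‑type terms and $(\overline\nabla^{2}h)(\nu,\nu)$‑type terms must cancel in pairs rather than survive. Working in a frame geodesic at $p$ and consistently re‑substituting $H=\langle\overline\nabla h,\nu\rangle$ for every derivative of $H$ is what makes these cancellations manifest; an alternative is to pass to the conformal metric $\tilde g=e^{-2h/n}\overline g$, in which $M$ is a genuine minimal hypersurface and $H$ corresponds to a Jacobi field, and to translate back, but the direct route keeps the answer in the stated $\overline\nabla^{3}h$ form.
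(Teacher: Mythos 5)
Your proposal is correct and follows essentially the same route as the paper: differentiate the $h$-minimality relation $H=\langle\overline{\nabla}h,\nu\rangle$ twice in a frame geodesic at $p$, use Gauss--Weingarten, the Codazzi equation to trade $\operatorname{div}A$ for $\nabla H$ plus an ambient Ricci term, and the Ricci commutation identity for $\overline{\nabla}^{3}h$ to produce the combination $2\sum_i(\overline{\nabla}^{3}h)_{i\nu i}-\sum_i(\overline{\nabla}^{3}h)_{\nu ii}$, with the $|A|^{2}H$, $\overline{R}ic_h(\nu,\nu)H$ and $\langle\nabla h,\nabla H\rangle$ terms cancelling exactly as you describe. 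Your preliminary reduction via the product rule $L_h(fH)=fL_hH+H\triangle_hf+2\langle\nabla f,\nabla H\rangle$ is a clean (and valid) streamlining of the paper's computation, which instead carries $f$ through the entire Hessian calculation and regroups the resulting cross terms at the end.
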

\begin{proof}
Choose a local orthonormal frame $\{e_1,\dots,e_n,e_{n+1}\}$ for $\overline{M} _{h}$ such that $\{e_1,\dots,e_n\}$ are tangent to $M _{h}$ and $e_{n+1}=\nu$ is normal to $M _{h}$. For simplicity, we replace $\nu$ for the subscript $n+1$ in the components of the tensors on $\overline{M} _{h}$; for instance
$\overline{R}_{\nu ikj}= \overline{R}m(\nu, e_{i}, e_{k}, e_{j})$ and $(\overline{\nabla}^{2}h)_{\nu,i}=(\overline{\nabla}^{2}h)(\nu,e_{i})$. Since $M _{h}$ is  an $h$-minimal hypersurface, multiplying \eqref{2.8} by $f\in C^{\infty}_{0}(M _{h})$ and differentiating, we obtain
\be
\no e_{i}(fH)&=&e_{i}\left(f\langle\overline{\nabla} h,\nu\rangle\right)\\
\no &=&f(\langle\overline{\nabla}_{e_{i}}(\overline{\nabla} h),\nu\rangle+\langle\overline{\nabla} h,\overline{\nabla}_{e_{i}}\nu\rangle)+H\langle \overline{\nabla}f,e_{i}\rangle\\
\no &=&f(\overline{\nabla}^{2}h(e_{i},\nu)+\sum_{k=1}^{n}a_{ik}\langle\overline{\nabla} h,e_{k}\rangle)+Hf_{i},
\en
for $1 \leq i \leq n$. Then for $1\leq i,j\leq n$, one has
\begin{eqnarray} \label{2.18}
\begin{aligned}
e_{j}e_{i}(fH)& = e_{j}\left(f\left[\overline{\nabla}^{2}h(e_{i},\nu)+\sum_{k=1}^{n}a_{ik}\langle\overline{\nabla} h,e_{k}\rangle\right]+Hf_{i}\right) \\
&=f\left[e_{j}(\overline{\nabla}^{2}h(e_{i},\nu))+\sum_{k=1}^{n}e_{j}(a_{ik})h_{k}+\sum_{k=1}^{n}a_{ik}e_{j}(\langle\overline{\nabla} h,e_{k}\rangle)\right] \\
&+f_{j}\left[\overline{\nabla}^{2}h(e_{i},\nu)+\sum_{k=1}^{n}a_{ik}\langle\overline{\nabla} h,e_{k}\rangle\right]+e_{j}(H)f_{i}+He_{j}(\langle \overline{\nabla}f,e_{i}\rangle).
\end{aligned}
\end{eqnarray}
Now, fix a point $p\in M_{h}$ and choose the local orthonormal frame  $\{e_1,\ldots,e_{n}\}$ so that $\nabla_{e_{i}}e_{j}(p)=\overline{\nabla}^{\top}_{e_{i}}e_{j}(p) = 0$, $1\leq i,j\leq n$.  Then, at the point $p$, we obtain from the Codazzi equation the following relations:
\be\label{2.19}
\raggedleft
\no( \nabla^{2} (fH))(e_{j},e_{i})&=&\langle\nabla_{e_{j}}\nabla  (fH), e_{i}\rangle\\
&=&e_{j}\langle\nabla  (fH), e_{i}\rangle-\langle\nabla (fH), \nabla_{e_{j}}e_{i}\rangle\\
\no&=&e_{j} e_{i} (fH),
\en
\begin{equation} \label{2.20}e_{j}
(\overline{\nabla}^{2}h(e_{i},\nu))=(\overline{\nabla}^{3}h)_{j\nu i}+\sum_{k=1}^{n}a_{jk}(\overline{\nabla}^{2}h)_{ki} -
a_{ji}(\overline{\nabla}^{2}h)_{\nu\nu},
\end{equation}
\begin{equation} \label{2.21}
\sum_{k=1}^{n}e_{j}(a_{ik})h_{k}=\sum_{k=1}^{n}a_{ij,k}h_{k}+\sum_{k=1}^{n}\overline{R}_{\nu ikj}h_{k},
\end{equation}
and
\begin{equation} \label{2.22}
\sum_{k=1}^{n}a_{ik}e_{j}(\langle\overline{\nabla} h,e_{k}\rangle)=\sum_{k=1}^{n}a_{ik}(\overline{\nabla}^{2}h)_{jk}-\sum_{k=1}^{n}a_{ik}a_{jk}h_{\nu}.
\end{equation}
On the other hand, the following holds on $M_{h}$:
\be
\no (\overline{\nabla}^{3}h)_{i\nu j}&=&(\overline{\nabla}^{2}h)_{\nu j,i}=(\overline{\nabla}^{2}h)_{j\nu ,i}\\
\no & = &(\overline{\nabla}^{2}h)_{ji,\nu}+\sum_{k=1}^{n+1}h_{k}\overline{R}_{kj\nu i}\\
\no & = &(\overline{\nabla}^{3}h)_{\nu ji}+h_{\nu}\overline{R}_{\nu i\nu j}+\sum_{k=1}^{n}h_{k}\overline{R}_{\nu ikj}.
\en
Thus, we have
\be\label{2.23}
\sum_{k=1}^{n}h_{k}\overline{R}_{\nu ikj}=
 (\overline{\nabla}^{3}h)_{i\nu j}-(\overline{\nabla}^{3}h)_{\nu ji}-h_{\nu}\overline{R}_{\nu i\nu j}.
 \en
Substituting \eqref{2.19}-\eqref{2.23} into \eqref{2.18} and taking into account that $h_{\nu}= H$, we have at $p$ and for $1 \leq i, j \leq n$,
\be
\no( \nabla^{2} (fH))(e_{j},e_{i}) &=&f\left[(\overline{\nabla}^{3}h)_{j\nu i}+ (\overline{\nabla}^{3}h)_{i\nu j}-(\overline{\nabla}^{3}h)_{\nu ji}+\sum_{k=1}^{n}a_{jk}(\overline{\nabla}^{2}h)_{ki}\right.\\
\no & &\left.
+\sum_{k=1}^{n}a_{ik}(\overline{\nabla}^{2}h)_{jk}+\sum_{k=1}^{n}a_{ij,k}h_{k}-a_{ji}(\overline{\nabla}^{2}h)_{\nu\nu}
-H\overline{R}_{i\nu j\nu }\right.\\
\no&& \left. -\sum_{k=1}^{n}a_{ik}a_{jk}H\right]+f_{j}\left[\overline{\nabla}^{2}h(e_{i},\nu)+\sum_{k=1}^{n}a_{ik}\langle\overline{\nabla} h,e_{k}\rangle\right]\\
\no&&+e_{j}(H)f_{i}+He_{j}(\langle \overline{\nabla}f,e_{i}\rangle).
\en
Taking the trace, we obtain
\be\label{2.24}
\no\triangle(fH)&=&f\left[2\sum_{i=1}^{n}(\overline{\nabla}^{3}h)_{i\nu i}-\sum_{i=1}^{n}(\overline{\nabla}^{3}h)_{\nu ii}+2\sum_{i,k=1}^{n}a_{ik}(\overline{\nabla}^{2}h)_{ki}\right.\\
 & &\left.
\langle\nabla h,\nabla H\rangle-\overline{R}ic_{h}(\nu,\nu)H-|A|^{2}H\right]\\
&& \no+\sum_{i=1}^{n}f_{i}\left[\overline{\nabla}^{2}h(e_{i},\nu)+\sum_{k=1}^{n}a_{ik}\langle\overline{\nabla} h,e_{k}\rangle\right]\\
\no&&+\sum_{i=1}^{n}e_{i}(H)f_{i}+\sum_{i=1}^{n}He_{i}(\langle \overline{\nabla}f,e_{i}\rangle).
\en
Since $p\in M_{h}$ is arbitrary and \eqref{2.24} is independent of the choice of the frame, by the expression of $L_{h}$ in \eqref{2.16} we have
\be\label{2.25}
\no L_{h}(fH)&=&f\left[2\sum_{i=1}^{n}(\overline{\nabla}^{3}h)_{i\nu i}-\sum_{i=1}^{n}(\overline{\nabla}^{3}h)_{\nu ii}+2\sum_{i,k=1}^{n}a_{ik}(\overline{\nabla}^{2}h)_{ki}\right]\\
&& +\sum_{i=1}^{n}f_{i}\left[\overline{\nabla}^{2}h(e_{i},\nu)+\sum_{k=1}^{n}a_{ik}\langle\overline{\nabla} h,e_{k}\rangle\right]\\
\no&&+\sum_{i=1}^{n}e_{i}(H)\langle\overline{\nabla}f,e_{i}\rangle+\sum_{i=1}^{n}He_{i}(\langle \overline{\nabla}f,e_{i}\rangle)-H\langle\nabla h,\nabla f\rangle.
\en
We will rearrange the last five terms of \eqref{2.25}. More precisely, for $1\leq i\leq n$, one has
\begin{eqnarray} \label{2.26}
\begin{aligned}
\nabla^{2}f(e_{i},e_{i}) &= \langle\nabla_{e_{i}}\nabla f,e_{i}\rangle= e_{i}(\langle\nabla f,e_{i}\rangle) -
\langle\nabla f,\nabla_{e_{i}}e_{i}\rangle \\
& = e_{i}(\langle\nabla f,e_{i}\rangle).
\end{aligned}
\end{eqnarray}
Since
$\overline{\nabla}f=\nabla f+\overline{\nabla}^{\perp}f$ lends to $\langle\overline{\nabla} f,e_{i}\rangle=\langle\nabla f,e_{i}\rangle$,
one has
\be\label{2.27}
&&\sum_{i=1}^{n}He_{i}(\langle \overline{\nabla}f,e_{i}\rangle)-H\langle\nabla h,\nabla f\rangle=H\triangle_{h} f.
\en
On the other hand,
\begin{eqnarray} \label{2.28}
\begin{aligned}
\sum_{i=1}^{n}f_{i}\overline{\nabla}^{2}h(e_{i},\nu)+\sum_{i=1}^{n}e_{i}(H)\langle\overline{\nabla}f,e_{i}\rangle &= 2\sum_{i=1}^{n}e_{i}(H)\langle\overline{\nabla}f,e_{i}\rangle \\
&-\sum_{i,k=1}^{n}f_{i}a_{ik}
\langle\overline{\nabla}h,e_{k}\rangle.
\end{aligned}
\end{eqnarray}
Finally, applying the equations \eqref{2.26}-\eqref{2.28} into \eqref{2.25}, we get \eqref{2.17}, and this concludes the proof.
\end{proof}

We are now in position to prove Theorem \ref{teo:2}.

\

\noindent\textbf{\textit{Proof of Theorem \ref{teo:2}}}. Multiplying \eqref{2.17} by $fH$, we obtain
\begin{eqnarray} \label{2.29}
\begin{aligned}
fHL_{h}(fH) &=f^{2}H\left[2\sum_{i=1}^{n}(\overline{\nabla}^{3}h)_{i\nu i}-\sum_{i=1}^{n}(\overline{\nabla}^{3}h)_{\nu ii}+2\sum_{i,k=1}^{n}a_{ik}(\overline{\nabla}^{2}h)_{ki}\right]\qquad \\
&+2fH\sum_{i=1}^{n}e_{i}(H)\langle\overline{\nabla}f,e_{i}\rangle+fH^{2}\triangle_{h}f.
\end{aligned}
\end{eqnarray}
By the assumption, one has $\overline{\nabla}(\overline{\nabla}^{2} h)\equiv 0$, and this implies that
\[
\sum_{i=1}^{n}(\overline{\nabla}^{3}h)_{i\nu i}
=\sum_{i=1}^{n}(\overline{\nabla}^{3}h)_{\nu ii}=0,
\]
which provides us with
\begin{eqnarray} \label{2.30}
\begin{aligned}
fHL_{h}(fH) &= 2f^{2}H\sum_{i,k=1}^{n}a_{ik}(\overline{\nabla}^{2}h)_{ki} + 2fH\sum_{i=1}^{n}e_{i}(H)\langle\overline{\nabla}f,e_{i}\rangle \\
&+fH^{2}\triangle_{h}f.
\end{aligned}
\end{eqnarray}
Setting
\be\no
\begin{aligned}
a:=2f^{2}H\sum_{i,k=1}^{n}a_{ik}(\overline{\nabla}^{2}h)_{ik}, \ \ b:=2fH\sum_{i=1}^{n}e_{i}(H)\langle\overline{\nabla}f,e_{i}\rangle \ \ c:=fH^{2}\triangle_{h}f.
\end{aligned}
\en
We obtain from Young's inequality the following estimate
\be\no
a&\leq &2\sum_{i,k=1}^{n}f^{2}|H||a_{ik}||(\overline{\nabla}^{2}h)_{ik}|\\
\no&\leq&2\sum_{i,k=1}^{n}\left( \dfrac{f^{2}H^{2}a_{ik}^{2}}{2}+ \dfrac{f^{2}(\overline{\nabla}^{2}h)^{2}_{ik}}{2}\right)\\
\no&=&| A |^{2}H^{2}f^{2}+| \overline{\nabla}^{2}h|^{2}f^{2}.
\en
The expression $b$ can be rewritten as
\[
\no b:=2fH\sum_{i=1}^{n}e_{i}(H)\langle\overline{\nabla}f,e_{i}\rangle=2fH\langle\nabla H,\nabla f\rangle,
\]
and using integration by parts in $c$, we have
\be\no
\int_{ M}c \ dv_{h}=\int_{ M}H^{2}f\triangle_{h} fdv_{h}=-\int_{ M}2Hf\langle \nabla H,\nabla f\rangle dv_{h}-\int_{ M}H^{2}|\nabla f |^{2}dv_{h}.
\en
Finally, using $a$, $b$ and $c$, we obtain the following integral inequality for \eqref{2.30}:
\begin{eqnarray} \label{2.31}
\begin{aligned}
-\int_{ M}fHL_{h}(fH)dv_{h}&\geq -\int_{ M}| A |^{2}H^{2}f^{2}dv_{h}- \int_{M}| \overline{\nabla}^{2}h|^{2}f^{2}dv_{h} \\
&+\int_{ M}H^{2}|\nabla f |^{2}dv_{h}.
\end{aligned}
\end{eqnarray}
Now, note that the first eigenvalue $\lambda_{1,D}$ of $D$ in \eqref{1.3} can be characterized variationally in the following way:
\be\label{2.32}
 \lambda_{1,D}\int_{M}f^{2}dV_{h}&\leq &\int_{ M}|\nabla f |^{2}dV_{h}, \ \ \forall \ f\in C^{\infty}_{0}(M_{h}).
\en
Replacing $f$ by $fH$ in \eqref{2.32}, it follows from Young's inequality that
\be\label{2.33}
 \lambda_{1,D}\int_{ M}f^{2}H^{2}dV_{h}&\leq& 2\left(\int_{ M}f^{2}|\nabla H|^{2} dV_{h}+ \int_{ M}H^{2}|\nabla f |^{2} dV_{h}\right).
\en
Joining \eqref{2.31} and \eqref{2.33}, we get
\be
\no -\int_{M}fHL_{h}(fH)dv_{h}&\geq & -\int_{ M}| A |^{2}H^{2}f^{2}dv_{h}- \int_{ M}| \overline{\nabla}^{2}h|^{2}f^{2}dv_{h}\\
\no&&+\dfrac{1}{2}\int_{ M}\lambda_{1,D}H^{2}f^{2}dv_{h}- \int_{ M}f^{2}|\nabla H|^{2} dv_{h}.
\en
Since $M_{h}$ is compact and $Ric_{h,c}>0$, as the mean curvature of the boundary satisfies $H_{h}^{\partial M_{h}}\geq0$ the Theorem \ref{teo:1} tells us that $\lambda_{1,D}>k_{h,c}$ for an appropriate positive constant $c$. Therefore, the inequality described above becomes
 \begin{eqnarray}
\begin{aligned}
-\int_{M}fHL_{h}(fH)dv_{h}&\geq \int_{M}\left(\dfrac{1}{2}k_{h,c}-| A|^{2}-\dfrac{1}{H^{2}}| \overline{\nabla}^{2}h|^{2}\right.\\
& \left.-\dfrac{1}{H^{2}}|\nabla H|^{2} \right)H^{2}f^{2}dv_{h}.
\end{aligned}
\end{eqnarray}
Then, from \eqref{1.6}, one has
\be
\no\dfrac{1}{2}k_{h}-| A|^{2}-\dfrac{1}{H^{2}}| \overline{\nabla}^{2}h|^{2}-\dfrac{1}{H^{2}}|\nabla H|^{2}\geq0,
\en
which allows us to conclude that
\be
\no -\int_{ M}fHL_{h}(fH)dv_{h}\geq  0,  \ \ \forall \  f\in C^{\infty}_{0}(M_{h}).
\en
Given $\varphi \in C^{\infty}_{0}(M_{h})$ and since that $H\neq 0$,  taking $f=\dfrac{1}{H}\varphi$, we have
\be
\no -\int_{ M}\varphi L_{h}(\varphi)dv_{h}\geq  0,  \ \ \forall \  \varphi\in C^{\infty}_{0}(M_{h}),
\en
and according to Definition \ref{definition 3.1}, we conclude that $M_{h}$ is $h$-stable. \qed

\

\

\noindent {\Large{Acknowledgments}}

\

 \noindent  ACB is grateful to the Goiano Federal Institute for the necessary conditions for the research internship, and to the Department of Mathematics of the University of Brasilia for its hospitality while this paper was being prepared. We express our
sincere gratitude to Keti Tenenblat for invaluable suggestions.

\Addresses

\end{document}